\def\cA{{\mathcal A}}
\def\cH{{\mathcal H}}
\def\cJ{{\mathcal J}}
\def\cG{{\mathcal G}}
\newtheorem{theorem}{Theorem}
\newtheorem{proposition}[theorem]{Proposition}
\newtheorem{lemma}[theorem]{Lemma}
\long\def\symbolfootnote[#1]#2{\begingroup\def\thefootnote{\fnsymbol{footnote}}
\footnote[#1]{#2}\endgroup}
\begin{document}

\title{A note on intersecting hypergraphs with large cover number}
\author{P.E.~Haxell\footnote{Department of Combinatorics and
Optimization, University of Waterloo, Waterloo, Ontario, Canada N2L
3G1. pehaxell@uwaterloo.ca; Partially
supported by NSERC.}\ \ 
and A.D.~Scott\footnote{Mathematical Institute,
    University of Oxford, Andrew Wiles Building, Radcliffe Observatory
    Quarter, Woodstock Rd, Oxford, UK OX2 6GG. scott@maths.ox.ac.uk}}
\date{}

\maketitle

\begin{abstract} 
\noindent
We give a construction of $r$-partite $r$-uniform intersecting
hypergraphs with cover number at least $r-4$ for all but finitely many
$r$. This answers a question of Abu-Khazneh, Bar\'at, Pokrovskiy and
Szab\'o, and shows that a long-standing unsolved conjecture due to Ryser  is close to being best possible for every value of $r$.
\end{abstract}

\noindent {\bf Keywords:} partite hypergraph, intersecting, cover

%\noindent {\bf AMS Subject Classification:} 

\section{Introduction}
A hypergraph is said to be $r$-{\it partite} if it has a vertex
partition $V_1\cup\cdots\cup V_r$ such that each edge contains at most
one vertex from each $V_i$. 
An old and well-studied conjecture of Ryser~\cite{R} states that every
$r$-partite $r$-uniform hypergraph $\cH$ satisfies
$\tau(\cH)\le(r-1)\nu(\cH)$, where  
$\tau(\cH)$ denotes the minimum size of a vertex cover and
$\nu(\cH)$ denotes the maximum size of a set of
pairwise disjoint edges in $\cH$. In particular this would imply that
every intersecting $r$-partite $r$-uniform hypergraph can be covered
by $r-1$ vertices. Despite substantial work by many authors over many years,
Ryser's Conjecture is known to be true in general only for $r=2$
(K\"onig's Theorem) and $r=3$~\cite{A}, and for intersecting
hypergraphs only for $r\leq 
5$~\cite{T}. For more on the history of the problem see e.g.~\cite{HS}.

Ryser's Conjecture is tight for a given value of $r$ if there is 
an $r$-partite $r$-uniform hypergraph $\cH$ with 
$\tau(\cH)\ge (r-1)\nu(\cH)$ (such hypergraphs are called $r$-{\em
  Ryser} hypergraphs in~\cite{ABPS}). Because of the apparent difficulty of
the problem in general, a significant amount of work has been done on
constructing and understanding $r$-Ryser hypergraphs
(e.g.~\cite{AP, ABW, FHMW, HNSz, MSY}). For
every prime power $p$ there is a standard construction, based on the
projective plane, that gives an intersecting
$r$-Ryser hypergraph for $r=p+1$. Very recently it was proved in~\cite{ABPS} that
intersecting $r$-Ryser hypergraphs exist also for every $r=p+2$. Apart from these
infinite families, the only other values of $r$ for which the conjecture is
known to be tight are $r=7$~\cite{ABW, AP}, $r=11$~\cite{AP} and
$r=12$~\cite{FHMW}. In~\cite{ABPS}
the authors ask whether there exists a constant $K$ such that for
every $r$ there exists an intersecting $r$-partite $r$-uniform
hypergraph $\cH$ with $\tau(\cH)\geq r-K$, thus showing that Ryser's
problem is close to being best possible for every $r$. Here we answer
this question in the affirmative, by showing in particular that we may
take $K\leq 4$ for all sufficiently large
integers $r$. This is our main result
(Theorem~\ref{main}), and it appears in Section~\ref{mainthm}. We also
give a new construction for intersecting $r$-Ryser hypergraphs for special
values of $r$ in Section~\ref{ee}.  

\section{Basic construction}

As in most known constructions for this problem (for example~\cite{ABPS}), our construction will be based on finite projective
planes and the corresponding affine planes. Recall that a {\it
  projective plane} 
of order $p$ is a $(p+1)$-uniform hypergraph with $p^2+p+1$ vertices,
with the property that each pair of edges (called {\it lines})
intersects in exactly one vertex, and each pair of vertices (called
{\it points}) is contained in exactly one line. It is well-known that
for every prime power $p$, there exists a projective plane PG(2,$p$)
of order $p$.

The affine plane AG$(2,p)$ is 
constructed from PG(2,$p$) by deleting a line $L$ and all of its
points (so we delete exactly one point from each line $L'\not=L$). 
Thus AG$(2,p)$ has $p^2$ points. Its lines each have $p$
points, and they fall into $p+1$ parallel classes $B_i$, each of which
is a set of $p$ disjoint lines (corresponding to the lines of
PG(2,$p$) passing through a single point of $L$). Any two
lines from different parallel classes have exactly one vertex in
common. 

We
define a hypergraph $\cA_p$ constructed from
AG$(2,p)$ by choosing an arbitrary point $x$ and removing it, together
with all lines that contain $x$. The remaining lines of AG$(2,p)$ form
the edges of $\cA_p$.

\begin{proposition} \label{Aprops} The hypergraph $\cA_p$ has the following properties.
\begin{enumerate}
\item $\cA_p$ is $(p+1)$-partite, with vertex classes $V_1,\ldots,
  V_{p+1}$ where $\{x\}\cup V_1,\ldots,\{x\}\cup V_{p+1}$ are the lines
  of AG$(2,p)$ containing $x$. Note $|V_i|=p-1$ for each $i$.
\item $\cA_p$ is $p$-uniform.
\item The edges of $\cA_p$ fall into $p+1$ parallel classes $C_i$,
  each of which is a set of $p-1$ disjoint edges. Any two
edges from different parallel classes have exactly one vertex in common.
\item Each edge of $C_i$ is disjoint from $V_i$.
\end{enumerate}
\end{proposition}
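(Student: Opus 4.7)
The plan is to derive all four properties directly from two standard facts about the affine plane $AG(2,p)$: the $p+1$ parallel classes $B_1,\dots,B_{p+1}$ are each a partition of the point set into $p$ lines of size $p$, and two lines from different parallel classes meet in exactly one point. The key preliminary observation is that the point $x$ lies on exactly one line $L_i$ from each parallel class $B_i$ (since $B_i$ partitions the point set), giving $p+1$ lines through $x$, and by definition $V_i = L_i\setminus\{x\}$.

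For part 1, since any two distinct lines through $x$ can meet only at $x$ (any two points determine a unique line), the sets $V_1,\ldots,V_{p+1}$ are pairwise disjoint, each of size $p-1$. Every point $y\neq x$ of $AG(2,p)$ lies on the unique line through $x$ and $y$, which must be one of the $L_i$, so the $V_i$ partition the vertex set of $\cA_p$. To verify $(p+1)$-partiteness, let $e$ be an edge of $\cA_p$, i.e.\ a line of $AG(2,p)$ not containing $x$. Then $e$ and $L_i$ are distinct lines, so they meet in at most one point, and this intersection lies in $V_i = L_i\setminus\{x\}$. Hence $|e\cap V_i|\le 1$.

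Part 2 is immediate since every line of $AG(2,p)$ has exactly $p$ points and deleting $x$ affects only the lines through $x$, which are not edges of $\cA_p$. For part 3, set $C_i := B_i\setminus\{L_i\}$. Since $L_i$ is the unique line of $B_i$ through $x$, the edges of $\cA_p$ are precisely the union of the $C_i$, and each $C_i$ consists of $p-1$ pairwise disjoint lines (being a subset of a parallel class). If $e\in C_i$ and $f\in C_j$ with $i\ne j$, then $e$ and $f$ are lines from different parallel classes of $AG(2,p)$ and hence meet in a single point; this point is not $x$ since neither $e$ nor $f$ contains $x$, so it is a vertex of $\cA_p$. For part 4, an edge $e\in C_i$ belongs to $B_i$ and so is parallel to $L_i$, giving $e\cap L_i = \emptyset$ and in particular $e\cap V_i=\emptyset$.

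There is no real obstacle here; the only thing to be careful about is to consistently use that removing $x$ also removes all $p+1$ lines through $x$ (one per parallel class), which is what keeps the $C_i$ from accidentally containing a line hitting $V_i$ and what makes the parallel-class structure of $\cA_p$ inherit cleanly from that of $AG(2,p)$.
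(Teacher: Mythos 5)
Your proof is correct and is exactly the routine verification from the definition of $\cA_p$ and the standard properties of AG$(2,p)$ that the paper leaves implicit (the proposition is stated without proof there). All four parts check out, including the key observations that $x$ lies on exactly one line of each parallel class and that $C_i=B_i\setminus\{L_i\}$.
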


Our aim is to construct an intersecting hypergraph based on $\cA_p$,
by adding a gadget for each parallel class $C_i$ to make it
intersect. To show that the cover number of the resulting construction
is large  we will make use of the following theorem of Jamison \cite{J}
and Brouwer and Schrijver~\cite{BS}. 

\begin{theorem}\label{BrSch}
$\tau({\rm{AG}}(2,p))=2p-1.$
\end{theorem}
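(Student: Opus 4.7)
The plan is to prove the two inequalities separately.

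\textbf{Upper bound.} For $\tau(\mathrm{AG}(2,p))\le 2p-1$ I would exhibit an explicit cover. Pick two lines $L_1, L_2$ from different parallel classes $B_i\ne B_j$. Since lines from distinct parallel classes intersect in exactly one vertex, $|L_1\cup L_2|=2p-1$. Any other line $L$ belongs to some parallel class, which must differ from at least one of $B_i, B_j$, so $L$ meets at least one of $L_1, L_2$. Hence $L_1\cup L_2$ is a cover.

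\textbf{Lower bound.} For $\tau(\mathrm{AG}(2,p))\ge 2p-1$ I would use Jamison's polynomial method (Brouwer--Schrijver give an alternative incidence-counting argument leading to the same bound). Identify $\mathrm{AG}(2,p)$ with $\mathbb{F}_p^2$ and let $B$ be any blocking set; after translating, assume $(0,0)\notin B$. Using only that every line through the origin is blocked already gives $|B|\ge p+1$, since these lines partition the nonzero points into $p+1$ directions and $B$ must contain a representative of each. The full bound requires also the blocking condition on affine lines missing the origin. The idea is to build from $B$ a polynomial $F\in\mathbb{F}_p[X,Y]$ with one factor per point of $B$ (so $\deg F\le |B|$), whose vanishing pattern on $\mathbb{F}_p^2$ is forced by the blocking property; reducing modulo $X^p-X$ and $Y^p-Y$ produces a function supported on $\{(0,0)\}$, whose reduced representation has total degree exactly $2(p-1)$, yielding $|B|\ge 2p-1$.

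\textbf{Main obstacle.} The lower bound is the substantive direction: the naive bound from lines through a single point gives only $p+1$, well short of $2p-1$. Lifting to the sharp bound requires the algebraic machinery of Jamison (or the incidence double-count of Brouwer--Schrijver across all parallel classes, tracking lines meeting $B$ in exactly one point), and the delicate step is arranging the polynomial so that its degree, its reduced representation modulo $X^p-X, Y^p-Y$, and its values on $\mathbb{F}_p^2$ all align to yield the $2(p-1)+1$ lower bound.
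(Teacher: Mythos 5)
The first thing to note is that the paper does not prove this theorem: it is imported from Jamison \cite{J} and Brouwer--Schrijver \cite{BS}, and the only argument supplied in the text is the remark after the statement giving a cover of size $2p-1$ (all $p$ points of one line of a parallel class plus one point from each of the remaining $p-1$ lines of that class). Your upper bound is correct and essentially equivalent: two non-parallel lines $L_1,L_2$ have $|L_1\cup L_2|=2p-1$, and every line fails to be parallel to at least one of them, hence meets the union. (Since $p$ is a prime power here, read $\mathbb{F}_p$ throughout as the field of order $p$; this does not affect the polynomial method.)

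The lower bound sketch, however, has a genuine off-by-one gap. With your normalization $(0,0)\notin B$, the natural polynomial is $F(X,Y)=\prod_{(u,v)\in B}(uX+vY-1)$. For $(a,b)\neq(0,0)$ one has $F(a,b)=0$ precisely because $B$ meets the line $\{(x,y):ax+by=1\}$, and these are exactly the lines \emph{missing} the origin; also $F(0,0)=(-1)^{|B|}\neq 0$. Reducing modulo $X^p-X$ and $Y^p-Y$ gives the unique reduced polynomial representing a multiple of the indicator of the origin, namely a nonzero scalar times $(1-X^{p-1})(1-Y^{p-1})$, of total degree $2(p-1)$. Since reduction cannot increase total degree, this yields $|B|\geq\deg F\geq 2(p-1)=2p-2$ --- not $2p-1$ as you claim (your own statement ``degree exactly $2(p-1)$, yielding $|B|\geq 2p-1$'' is internally inconsistent). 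Moreover, no argument that uses only the lines avoiding the origin can do better: the punctured axes $\{(a,0),(0,a):a\neq 0\}$ block all such lines with exactly $2p-2$ points. The missing step, which is precisely the Brouwer--Schrijver trick, is to translate so that a point of $B$ \emph{is} the origin and delete it: the set $B\setminus\{(0,0)\}$ still meets every line not through the origin (such a line is blocked by $B$ but cannot be blocked at the origin), so the polynomial bound applies to it and gives $|B|-1\geq 2(p-1)$, i.e.\ $|B|\geq 2p-1$. With that one extra move your outline becomes the standard proof; without it, the argument terminates at $2p-2$.
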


Note that a cover of size $2p-1$ can be obtained by choosing a
parallel class $C_i$, and taking all points from one line in $C_i$ and
one point from each of the remaining lines in $C_i$.

\section{Near-extremal constructions}\label{ne}

Let $\cJ$ be an $r_0$-partite $r_0$-uniform intersecting hypergraph
with $r_0\leq p$ and $\tau(\cJ)\geq r_0-1-d_0\geq 2$ for some $d_0\geq 0$. 

Set $r=p+r_0$. We construct an $r$-partite $r$-uniform hypergraph
$\cH_r$ as follows. Fix a copy of $\cA_p$ with vertex classes
$V_1,\ldots,V_{p+1}$ (as in Proposition \ref{Aprops}). For each parallel class $C_i$ of $\cA_p$ place a
copy $\cJ^i$ of $\cJ$ with one vertex class in $V_i$ and the remaining
$r_0-1$ classes in $V_{p+2}\ldots V_{p+r_0}$ in an arbitrary
way, such that all $\cJ^i$ are disjoint from each other and from
$\cA_p$. Extend every edge $e$ of $C_i$ to $|\cJ|$ edges 
$e\cup f$ of $\cH_r$ by
appending each edge $f$ of $\cJ^i$ to $e$. 
Thus the edge set of $\cH_r$ is $\bigcup_{i=1}^{p+1}\{e\cup f:e\in C_i,f\in\cJ^i\}$.

\begin{proposition}\label{Hprops} The hypergraph $\cH_r$ has the following properties.
\begin{enumerate}
\item $\cH_r$ is an $r$-partite $r$-uniform intersecting hypergraph,
\item $\tau(\cH_r)\geq r-1-(d_0+1)$.
\end{enumerate}
\end{proposition}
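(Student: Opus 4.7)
Part 1 is immediate from the construction. Each edge $e\cup f$ with $e\in C_i$, $f\in\cJ^i$ contains exactly one vertex in each of the $r=p+r_0$ classes $V_1,\ldots,V_r$, so $\cH_r$ is $r$-partite and $r$-uniform. For intersecting, any two edges $e_1\cup f_1$ and $e_2\cup f_2$ with $e_1\in C_i$, $e_2\in C_j$ must meet: if $i=j$, then $f_1\cap f_2\neq\emptyset$ because $\cJ$ is intersecting, while if $i\neq j$, then $e_1\cap e_2\neq\emptyset$ by Proposition~\ref{Aprops}(3).

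For Part 2, let $T$ be any cover of $\cH_r$ and split it as $S=T\cap V(\cA_p)$ and $T^i=T\cap V(\cJ^i)$ for $i=1,\ldots,p+1$. Because the construction places the $\cJ^i$ pairwise vertex-disjoint and disjoint from $\cA_p$, these pieces are disjoint, so $|T|=|S|+\sum_i|T^i|$. The central step is the following dichotomy: for each $i$, either $S$ already covers $C_i$ entirely, or else some $e\in C_i$ is disjoint from $T$, and then the $\cH_r$-edges $e\cup f$ force $T^i\cap f\neq\emptyset$ for every $f\in\cJ^i$, giving $|T^i|\geq\tau(\cJ)\geq r_0-1-d_0$. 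Let $I_2$ denote the set of indices $i$ for which only the second alternative is available.

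The proof finishes by case-analysing $|I_2|$. If $I_2=\emptyset$, then $S$ covers every edge of $\cA_p$, so $S\cup\{x\}$ covers $\mathrm{AG}(2,p)$ and Theorem~\ref{BrSch} yields $|T|\geq|S|\geq 2p-2$. If $1\leq|I_2|\leq p$, some $i\notin I_2$ has $C_i$ covered by $S$; Proposition~\ref{Aprops}(3) makes $C_i$ a matching of $p-1$ edges, so $|S|\geq p-1$, and picking a single index from $I_2$ yields $|T|\geq(p-1)+(r_0-1-d_0)=r-d_0-2$. If $|I_2|=p+1$, then $|T|\geq(p+1)(r_0-1-d_0)\geq 2(p+1)$, using the standing hypothesis $r_0-1-d_0\geq 2$. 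The hypothesis $r_0\leq p$ together with $d_0\geq 0$ makes each of these bounds at least $r-d_0-2$, with equality only in the middle case. The main conceptual obstacle is setting up the dichotomy correctly (in particular, being careful that the $\cA_p$-side and $\cJ^i$-side of $T$ are vertex-disjoint and accounted for separately); after that, the two lower bounds on $|S|$ are straightforward matching estimates and the key external input is Theorem~\ref{BrSch}, used in the $I_2=\emptyset$ case via adjoining $x$.
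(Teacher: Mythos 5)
Your proof is correct and follows essentially the same route as the paper's: split the cover between $\cA_p$ and the $\cJ^i$, then case on how many parallel classes force a full cover of some $\cJ^i$ (none: Brouwer--Schrijver via adjoining $x$; some but not all: a matching of $p-1$ disjoint edges of $C_i$ plus one $\tau(\cJ)$; all: $(p+1)\tau(\cJ)$). The only difference is cosmetic: your per-index dichotomy (either $S$ covers $C_i$, or $T^i$ covers $\cJ^i$) works for an arbitrary cover, whereas the paper derives the same conclusion via a claim that uses minimality of $T$.
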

\begin{proof}
The definitions, together with Part 3 of Proposition~\ref{Aprops}, imply that $\cH_r$ is $r$-partite and
$r$-uniform. To see that $\cH_r$ is intersecting, let $e\cup f$ and
$e'\cup f'$ be
two edges of $\cH_r$. If $e=e'$ or if $e$ and $e'$ are from different parallel
classes of $\cA_p$ then they intersect in $\cA_p$, implying that
$e\cup f$ and $e'\cup f'$ intersect in $\cH_r$. If $e$ and $e'$ are from
the same parallel class of $\cA_p$ then $f$ and $f'$ are
two (not necessarily distinct) edges from the same copy of the
intersecting hypergraph 
$\cJ$, and therefore they intersect. 

To estimate $\tau(\cH_r)$, consider a minumum cover $T$. 

\medskip

\noindent{\bf Case 1:} No vertex of $T$ is in any $\cJ^i$.

In this case $T\cup\{x\}$ is a cover of the affine plane AG$(2,p)$,
which by Theorem~\ref{BrSch} must have size at least $2p-1$. Hence
$|T|\geq 2p-2\geq p+r_0-2=r-2\geq r-1-(d_0+1)$.

\medskip

To address the remaining cases, we claim that if $T$ contains a vertex
$z$ of
$\cJ^i$ then $T$ contains a cover of $\cJ^i$. To see this, suppose
on the contrary that some edge $f$ of $\cJ^i$ is disjoint from
$T$. Since $T$ is a minimum cover there exists an edge $e\cup f'$ of
$\cH_r$ such that $T\cap(e\cup f')=\{z\}$, where $e\in C_i$ and
$f'\in\cJ^i$. Then $T\cap e=\emptyset$. But then $e\cup f\in\cH_r$ is
disjoint from $T$, contradicting the fact that $T$ is a cover. This
verifies the claim. 

\medskip

\noindent{\bf Case 2:} For some $i\not= j$, the cover $T$ intersects
$\cJ^i$ but not $\cJ^j$.

Then by the claim $T$ contains a cover of $\cJ^i$, which has size at
least $r_0-1-d_0$. Since $T$ has no vertices in $\cJ^j$ it must
cover $C_i$ within the vertex set of $\cA_p$ which is disjoint from
$\cJ^i$. Since the $p-1$ edges in $C_i$ are disjoint, we get another $p-1$ vertices in $T$, for a total of
$p-1+r_0-1-d_0=r-1-(d_0+1)$ as required.

\medskip

\noindent{\bf Case 3:} $T$ intersects $\cJ^i$ for every $i$.

Since the $\cJ^i$ are all disjoint we find by the claim that
$|T|\geq\tau(\cJ)(p+1)\geq 2p+2>r-1-(d_0+1)$.    

\medskip

Therefore in all cases the statement holds.
\end{proof}

\section{The main theorem}\label{mainthm}

We begin with a construction of $r$-partite $r$-uniform hypergraphs when $r$ has a special form.

\begin{lemma}\label{kprimes}
Let $r=\sum_{i=1}^kp_i+1$, where each $p_i$ is a prime power
and $p_i\geq\sum_{j<i}p_j+1$ for each $i\geq 2$. Then there
exists an $r$-partite $r$-uniform intersecting hypergraph $\cH_r$ with
$\tau(\cH)\geq r-k$.
\end{lemma}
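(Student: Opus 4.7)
The natural approach is induction on $k$, using the construction of Section~\ref{ne} as the inductive step and the standard projective-plane construction as the base case.

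\textbf{Base case} ($k=1$): here $r=p_1+1$ with $p_1$ a prime power, and the classical construction from PG$(2,p_1)$ (cited in the introduction) provides an intersecting $r$-partite $r$-uniform hypergraph with $\tau\geq r-1=r-k$.

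\textbf{Inductive step}: assume the result for $k-1$. Set $r_0=\sum_{i=1}^{k-1}p_i+1$; since the truncated sequence $p_1,\ldots,p_{k-1}$ satisfies the same growth condition, the inductive hypothesis produces an intersecting $r_0$-partite $r_0$-uniform hypergraph $\cJ$ with $\tau(\cJ)\geq r_0-(k-1)$. Take $d_0=k-2$, so that $\tau(\cJ)\geq r_0-1-d_0$, and apply the construction of Section~\ref{ne} with prime power $p=p_k$ and gadget $\cJ$. The hypothesis $r_0\leq p$ is exactly the growth condition $p_k\geq\sum_{j<k}p_j+1=r_0$, while the auxiliary condition $r_0-1-d_0\geq 2$ reduces to $r_0\geq k+1$, which holds since $r_0\geq 2(k-1)+1$ (each $p_i\geq 2$). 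Proposition~\ref{Hprops} then yields an $r$-partite $r$-uniform intersecting hypergraph $\cH_r$ with $r=p+r_0=\sum_{i=1}^{k}p_i+1$ and $\tau(\cH_r)\geq r-1-(d_0+1)=r-k$, completing the induction.

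\textbf{Main obstacle}: there really isn't one; Proposition~\ref{Hprops} does all the heavy lifting. The only point requiring care is the bookkeeping: each application of the construction of Section~\ref{ne} bumps the cover-number deficit by exactly one (from $d_0$ to $d_0+1$), so starting from the tight projective-plane base case (deficit $0$) and iterating $k-1$ times yields deficit $k-1$, i.e.\ $\tau\geq r-k$. The growth hypothesis $p_i\geq\sum_{j<i}p_j+1$ is precisely what guarantees that at each stage $r_0\leq p$, allowing the construction to be applied.
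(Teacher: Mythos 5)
Your proof is correct and follows essentially the same route as the paper: induction on $k$ with the truncated projective plane as base case and Proposition~\ref{Hprops} (with $d_0=k-2$) as the inductive step, including the verification that the growth condition gives $r_0\le p_k$ and that the deficit bound stays at least $2$.
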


\begin{proof}
We use induction on $k$. The case $k=1$ is dealt with by the standard
example of the truncated projective plane (formed by removing one
point from the projective plane, together with every line containing
it): we obtain an example with $r=p+1$ classes and $\tau=p=r-1\ge2$.

Assume $k\geq 2$ and let $\cH_s$ be a hypergraph with the claimed
properties for $s=r-p_k$. Observe that the conditions guarantee
$p_k\geq s$. Note also that $\tau(\cH_s)\geq 2$. Construct
$\cH_r$ as in Section~\ref{ne},
starting with the hypergraph $\cA_{p_k}$ and using
$\cJ=\cH_s$. Then by the induction hypothesis
$\tau(\cJ)\geq s-(k-1)=s-1-d_0$ where $d_0= k-2$. By
Proposition~\ref{Hprops} we obtain an intersecting  hypergraph $\cH_r$
for $r=s+p_k$ that is $r$-partite and $r$-uniform, that satisfies 
$$\tau(\cH_r)\geq r-1-(d_0+1)=r-k.$$
This completes the proof.
\end{proof}

In fact we will use this lemma below only when each $p_i$ is prime and $k=3$.

To show the existence of suitable primes we use the following
classical result of Montgomery and Vaughan~\cite{MV}.

\begin{theorem}\label{MontV}
There exist $Q$, $\gamma>0$ and $N$ such that for all $n>N$, all but at
most $Qn^{1-\gamma}$ even integers in the interval $(0,n)$ are the sum
of two primes.
\end{theorem}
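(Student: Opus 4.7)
The plan is to follow the Hardy--Littlewood circle method enhanced with mean-square estimates, which is the route taken in the original paper of Montgomery and Vaughan. For a parameter $N$, let $S(\alpha) = \sum_{p\le N}(\log p)\,e^{2\pi i \alpha p}$ and, for every integer $n\le N$, set $R(n) = \sum_{p_1+p_2=n}(\log p_1)(\log p_2)=\int_0^1 S(\alpha)^2\, e^{-2\pi i \alpha n}\,d\alpha$. A standard partial summation reduces the theorem to showing that $R(n)>0$ for all but $O(N^{1-\gamma})$ even $n\le N$, since any such $n$ then admits at least one representation as a sum of two primes.

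First I would dissect $[0,1]$ into major arcs $\mathfrak{M}$ (neighbourhoods of rationals of small denominator) and their complementary minor arcs $\mathfrak{m}$. On $\mathfrak{M}$, Siegel--Walfisz type estimates yield
\[
\int_{\mathfrak{M}} S(\alpha)^2\, e^{-2\pi i \alpha n}\,d\alpha \;=\; \mathfrak{S}(n)\,n + E(n),
\]
where $\mathfrak{S}(n)$ is the Goldbach singular series (bounded uniformly below for even $n$) and $E(n)$ is a controllable error term. Next, I would bound the minor-arc contribution in mean square: by Parseval's identity applied to the $[0,1]$-function $\chi_{\mathfrak{m}}(\alpha)S(\alpha)^2$,
\[
\sum_{n\le N}\Bigl|\int_{\mathfrak{m}} S(\alpha)^2\, e^{-2\pi i \alpha n}\,d\alpha\Bigr|^2 \;\le\; \int_{\mathfrak{m}} |S(\alpha)|^4\,d\alpha,
\]
and the fourth-moment integral on the right would be treated by decomposing $S$ via Vaughan's identity into Type I and Type II bilinear sums and combining this with large-sieve / mean-value inequalities for Dirichlet polynomials, producing a bound of the form $N^{2-\gamma}$.

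Combining these two estimates and applying Chebyshev's inequality, the number of even $n\le N$ for which the minor-arc term can cancel the main term $\mathfrak{S}(n)\,n$ is at most $O(N^{1-\gamma})$; every remaining even $n\le N$ then satisfies $R(n)\ge \mathfrak{S}(n)\,n/2>0$ and is therefore a sum of two primes, which is the desired statement.

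The principal obstacle is obtaining a genuine power saving $N^{-\gamma}$ rather than merely a saving by a power of $\log N$: the savings that come directly from Siegel--Walfisz are far too weak, and one has to quantify carefully the possibility of an exceptional (Siegel) zero of a Dirichlet $L$-function. In that regime the Deuring--Heilbronn repulsion phenomenon forces all remaining zeros far from the line $\mathrm{Re}\, s=1$ which, fed into the zero-density estimates underlying the large-sieve bound above, restores the needed power saving on the fourth-moment integral. Converting the resulting weighted statement for $R(n)$ back to the unweighted count of prime representations of $n$ via partial summation is routine once this power-saving input is in place.
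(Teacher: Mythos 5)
This is one of the statements the paper does not prove at all: Theorem~\ref{MontV} is imported as a black box, cited to Montgomery and Vaughan's 1975 paper \emph{The exceptional set in Goldbach's problem}, and the construction in Section~\ref{mainthm} only uses its conclusion. So there is no in-paper argument to compare yours against. What you have written is a plausible summary of the architecture of the original Montgomery--Vaughan proof (circle method, major-arc asymptotic with the singular series, mean-square treatment of the minor arcs via Bessel/Parseval, and the exceptional-zero and Deuring--Heilbronn analysis as the source of the power saving). But as a proof it is only a roadmap: every genuinely difficult step --- the power-saving minor-arc estimate, the zero-density input, the handling of a possible Siegel zero --- is named rather than carried out, and for a result of this depth naming the techniques is not the same as proving the theorem. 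If the intent is to use the result, the honest move is to cite it, as the paper does.

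There is also a concrete quantitative slip you should fix even at the level of the sketch. You claim a bound of the form $N^{2-\gamma}$ for $\int_{\mathfrak{m}}|S(\alpha)|^4\,d\alpha$. This is impossible: since the major arcs carry a negligible share of the $L^2$-mass, $\int_{\mathfrak{m}}|S|^2\gg N\log N$, and Cauchy--Schwarz then forces $\int_{\mathfrak{m}}|S|^4\ge\bigl(\int_{\mathfrak{m}}|S|^2\bigr)^2\gg N^2\log^2N$. Moreover, if such a bound did hold, your Chebyshev step with threshold $\asymp n\asymp N$ would yield at most $O(N^{-\gamma})$ exceptional $n$, i.e.\ it would prove Goldbach for all large even integers --- a sign that the exponent cannot be right. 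The correct target, and the one Montgomery and Vaughan actually establish (in the equivalent form $\sum_{n\le N}|R(n)-\mathfrak{S}(n)n|^2\ll N^{3-\delta}$), is a saving of a power of $N$ over the trivial size $N^3$ of the fourth moment; Chebyshev then gives $\ll N^{3-\delta}/N^2=N^{1-\delta}$ exceptions in a dyadic range, and summing over dyadic ranges gives the stated form with exceptions counted in $(0,n)$.
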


We are now ready to prove our main theorem.

\begin{theorem}\label{main}
There exists $M$ such that for every integer $r>M$
\begin{itemize}
\item if $r$ is even then there exists an $r$-partite $r$-uniform
  intersecting hypergraph $\cH$ with $\tau(\cH)\geq r-3$,
\item  if $r$ is odd then there exists an $r$-partite $r$-uniform
  intersecting hypergraph $\cH$ with $\tau(\cH)\geq r-4$.
\end{itemize}
\end{theorem}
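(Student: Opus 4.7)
The plan is to treat the two parities of $r$ separately: for even $r$ I will apply Lemma~\ref{kprimes} with $k=3$ and each $p_i$ an odd prime, and for odd $r$ I will run the construction of Section~\ref{ne} once more, taking $\cJ$ to be the hypergraph produced in the even case.

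For even $r$, I want to write $r-1=p_1+p_2+p_3$ with primes $p_1<p_2<p_3$ satisfying the growth requirement $p_3\ge p_1+p_2+1$ of Lemma~\ref{kprimes}. Since $r-1$ is odd, no $p_i$ can equal $2$ (exactly one $p_i=2$ forces an even sum, and two copies of $2$ violate the growth condition), so all three will be odd primes. First I would use Bertrand's postulate to locate an odd prime $p_3$ in a window around $r/2$, say $[r/2,\,r/2+r^{1-\gamma/2}]$; this ensures automatically that $p_3\ge (r-1-p_3)+1=p_1+p_2+1$. For each candidate the value $m:=r-1-p_3$ is a positive even integer less than $r$, and by Theorem~\ref{MontV} the number of such $m$ that are not sums of two primes is at most $Qr^{1-\gamma}$, which is $o(r^{1-\gamma/2}/\log r)$, i.e.\ a vanishing fraction of the available primes $p_3$ in our window. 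Thus some $p_3$ yields $m=p_1+p_2$ with $p_1,p_2$ primes, necessarily both odd; by varying $p_3$ if necessary (the set of even $m$ representable only as $p+p$ is even sparser, since $m=2p$ prime with $2p-3$ also prime already gives the alternative decomposition $3+(2p-3)$) I can arrange $p_1<p_2$. Lemma~\ref{kprimes} then produces an intersecting $r$-partite $r$-uniform $\cH$ with $\tau(\cH)\ge r-3$.

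For odd $r$, let $M$ denote the threshold from the even case. By Bertrand's postulate, for $r$ sufficiently large there is an odd prime $q\in[r/2,\,r-M]$. Set $r':=r-q$; since $r$ and $q$ are both odd, $r'$ is even, and by choice $r'\ge M$, so the even case provides an intersecting $r'$-partite $r'$-uniform $\cJ=\cH_{r'}$ with $\tau(\cJ)\ge r'-3$. Since $q\ge r'$ and $\tau(\cJ)\ge r'-1-d_0$ with $d_0=2$, the construction of Section~\ref{ne} applies with $p=q$, and Proposition~\ref{Hprops} delivers an $r$-partite $r$-uniform intersecting $\cH_r$ with $\tau(\cH_r)\ge r-1-(d_0+1)=r-4$.

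The main obstacle is the even case: Lemma~\ref{kprimes} demands the strict inequality $p_1<p_2$, while Theorem~\ref{MontV} only guarantees \emph{some} representation as a sum of two primes. This is handled by the flexibility in the choice of $p_3$ over a window whose size dominates both the Montgomery--Vaughan exceptional count and the even sparser contribution of numbers whose only representation is $p+p$, leaving at least one valid choice for $r$ large enough.
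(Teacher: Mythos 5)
Your strategy for even $r$ is the same as the paper's: write $r-1=p_1+p_2+p_3$ with $p_3$ a prime a bit larger than $(r-1)/2$ (so the growth condition of Lemma~\ref{kprimes} is automatic) and use Theorem~\ref{MontV} plus prime counting to find such a decomposition. But the execution has a genuine gap, in two places. First, you put $p_3$ in a window $[r/2,\,r/2+r^{1-\gamma/2}]$ and invoke ``Bertrand's postulate'' to supply on the order of $r^{1-\gamma/2}/\log r$ primes there. Bertrand's postulate gives one prime in $[n,2n]$ and says nothing about intervals of length $r^{1-\gamma/2}$; lower bounds for the number of primes in intervals of length $x^{\theta}$, $\theta<1$, are deep (Huxley-type results, and only for $\theta>7/12$, with no control over how large $\gamma$ in Theorem~\ref{MontV} might be a priori). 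Second, your disposal of the $p_1\neq p_2$ requirement --- that the even $m$ representable only as $p+p$ form a set ``even sparser'' than the Montgomery--Vaughan exceptional set --- is not justified by the quoted form of Theorem~\ref{MontV} and is false at the level of usable bounds: the only superset you can count is $\{2p:p\mbox{ prime}\}$, whose counting function in the relevant range is $\Theta(t/\log t)$, enormously larger than the $O(t^{1-\gamma})$ exceptional set and comparable to the number of candidate $p_3$ themselves. The argument survives only via an explicit comparison of constants: for $m=t-p_3\in[t/4,t/2)$ the number of $m$ of the form $2p$ is $\sim(t/8)/\log t$, while the number of admissible $p_3$ is $\sim(t/4)/\log t$. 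This is exactly the paper's computation $w-z-y\geq 1$ with $w=p(3t/4)-p(t/2)$, $y=p(t/4)-p(t/8)$, $z=Q(t/2)^{1-\gamma}$, and it needs the long interval $(t/2,3t/4]$ for $p_3$, where the ordinary Prime Number Theorem suffices. Widening your window to $(t/2,3t/4]$ repairs both defects at once.

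Your odd case is correct but roundabout: you rerun the Section~\ref{ne} construction with $p=q$ a prime near $r/2$ and $\cJ=\cH_{r'}$, and Proposition~\ref{Hprops} indeed gives $\tau\geq r-1-(d_0+1)=r-4$ once the hypotheses $r_0\leq p$ and $\tau(\cJ)\geq r_0-1-d_0\geq 2$ are checked (they hold for large $r$). The paper gets the same bound with no further number theory by appending, in one new vertex class, a distinct new vertex to every edge of the even-case example $\cH_{r-1}$; this preserves the cover number, so $\tau\geq(r-1)-3=r-4$.
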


\begin{proof}
We note that the second claim follows immediately from the
first, since we may construct an 
$r$-partite $r$-uniform
  intersecting hypergraph $\cH'$ from an $(r-1)$-partite $(r-1)$-uniform
  intersecting hypergraph $\cH$ by adding a new vertex class, and adding a
  new vertex in this class to every edge of $\cH$. Then $\tau(\cH')=\tau(\cH)$. 
Thus we may assume that $r$ is even.

Our aim is to write $r=p_1+p_2+p_3+1$,
where $p_1, p_2, p_3$ satisfy $p_2>p_1$ and $p_3>p_2+p_1$, as in Lemma~\ref{kprimes}.
Let $Q$, $N$ and $\gamma$ be as in
Theorem~\ref{MontV}. For an interval $I$ we write $pI$ for
the number of primes in $I$, and for a real number $x$ we let $p(x)$ denote
$p[1,x]$. The Prime Number Theorem tells us that $p(x)=(1+o(1))x/{\log
  x}$. Therefore there exists $M\geq N$ such that for all $t\geq M$ we have 
$$p({3t}/4)+p( t/8)-p( t/2)-p( t/4)-Q\cdot(t/2)^{1-\gamma}\geq 1.$$

Let $r>M$ be an even integer. Set $t=r-1$. Let $w=p(\frac
t2,\frac{3t}4]=p(\frac{3t}4)-p(\frac t2)$, so there are $w$ choices for
a prime $p_3$ in the interval $(\frac t2,\frac{3t}4]$. Thus there are
  $w$ integers in the interval $[\frac t4,\frac t2)$ of the form
    $t-p_3$ where $p_3$ is prime.

Now we show that one of these $w$ integers can be written as $p_1+p_2$
for distinct primes $p_1$ and $p_2$. Let us call such an integer {\it
  good}. By Theorem~\ref{MontV} there are 
at most $z=Q\cdot (\frac t2)^{1-\gamma}$ integers in $[\frac t4,\frac t2)$
  that are not the sum of two primes. The number $y$ of integers in
  $[\frac t4,\frac t2)$ of the form $2p$ where $p$ is prime is
    $p[\frac t8,\frac t4)=p(\frac t8, \frac t4]$ since neither  $t/4$
    nor $t/2$ is an integer. Thus $y=p(\frac t4)-p(\frac t8)$. Thus
    the number of good integers is at least
$$w-z-y=p({3t}/4)+p( t/8)-p( t/2)-p( t/4)-Q\cdot(t/2)^{1-\gamma}\geq 1.$$

Therefore a good integer exists and we can write $r-1=t=p_1+p_2+p_3$
where $p_1<p_2$ and $p_3\geq \frac{t+1}2=
1+\frac{t-1}2\geq1+p_1+p_2$. Therefore by Lemma~\ref{kprimes} there
exists an $r$-partite $r$-uniform intersecting hypergraph $\cH_r$ with
$\tau(\cH)\geq r-3$ as required.
\end{proof}

\section{Extremal constructions}\label{ee}

Here we give another construction based on the hypergraph $\cA_p$ of
an $r$-partite $r$-uniform intersecting hypergraph with cover number
exactly $r-1$. It exists whenever $r=2p-1$ and both $p$ and $p-1$ are prime powers.

Our construction gives a tight example for Ryser's conjecture for a few previously unknown values of $r$.  Note that if $p$, $p-1$ are both prime powers then
one of $p$, $p-1$ must be a power of 2.  If $p=2^{i-1}+1$ then
$r=2^i+1$: since $r-1$ is a prime power, there is already an extremal
construction for this $r$.  However,  
if $p=2^{i-1}$ and $p-1$ is also prime, then we obtain a construction for $r=2p-1=2^i-1$.  The construction gives a previously unknown value of $r$ if 
neither of  $r-1=2^i-2$ and $r-2=2^i-3$ is a prime power.
For example, this holds when $i$ is any of 
8, 18, 32, 62, 90, 108, 128, 522, 608, 1280, 2204, 2282, 3218, 4254, 4424, 9690, 9942, 11214, 19938.   We remark that
the examples on this list all satisfy $r=2p-1$ where $p-1=2^{i-1}-1$ is a Mersenne prime (and recall that it is unknown whether
there are infinitely many Mersenne primes).

We now describe the construction.  We repeat the general idea of
Section~\ref{ne}. Let $p$ be a 
prime power such that $p-1$ is also a prime power. This time we
begin with the hypergraph $\cJ$ formed from AG$(2,p-1)$ by removing
the lines of one parallel class $B_1$ and declaring them to be the
classes of a vertex partition into $p-1$ vertex classes, each of size
$p-1$. Then $\cJ$ is 
an $r_0$-partite $r_0$-uniform hypergraph with $r_0= p-1$, with $p-1$
parallel classes of edges, each containing $p-1$ vertices, and any two
edges from different parallel classes intersect. 
%(We remark that
%$\tau(\cJ)\geq p-2$: if $T$ is a cover of $\cJ$ then adding one vertex from
%each line in $B_1$ gives a cover of AG$(2,p-1)$. Hence by
%Theorem~\ref{BrSch} we know that $|T|+p-1\geq2(p-1)-1$.)

Set $r=p+r_0=2p-1$. We construct an $r$-partite $r$-uniform hypergraph
$\cG_r$ as follows. Fix a copy of $\cA_p$ with vertex classes
$V_1,\ldots,V_{p+1}$. For each parallel class $C_i$ of $\cA_p$ place a
copy $\cJ^i$ of $\cJ$ with one vertex class in $V_i$ and the remaining
$r_0-1$ classes in $V_{p+2}\ldots V_{p+r_0}$ in an arbitrary
way, such that all $\cJ^i$ are disjoint from each other and from
$\cA_p$. Take an arbitrary matching between the set $C_i$ and the set
of parallel classes of $\cJ^i$ and extend every edge $e$ of $C_i$ to
$p-1$ edges  $e\cup f$ of $\cG_r$ by
appending to $e$ each edge $f$ of the parallel class of $\cJ$ matched to $e$.

\begin{theorem}\label{Gprops} The hypergraph $\cG_r$ has the following properties.
\begin{enumerate}
\item $\cG_r$ is an $r$-partite $r$-uniform intersecting hypergraph,
\item $\tau(\cG_r)\geq r-1$.
\end{enumerate}
\end{theorem}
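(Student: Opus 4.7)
My plan is to follow the template of Proposition~\ref{Hprops}, adapting it for two features specific to this construction: the matching between $C_i$ and the parallel classes of $\cJ^i$ (only one such class is appended to each $e\in C_i$), and the fact that $\cJ$ itself is no longer intersecting. For part~1, consider two edges $e\cup f$ and $e'\cup f'$ of $\cG_r$. If $e=e'$, or if $e,e'$ come from different parallel classes $C_i,C_j$, they already meet in $\cA_p$ by Proposition~\ref{Aprops}. The remaining case is $e\neq e'$ with both $e,e'\in C_i$: here the matching assigns $e$ and $e'$ to distinct parallel classes of $\cJ^i$, so $f,f'$ lie in different parallel classes of AG$(2,p-1)$ and must intersect.

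For part~2, let $T$ be a minimum cover and decompose $T=T_A\cup\bigcup_{i=1}^{p+1}T_i$ with $T_A=T\cap V(\cA_p)$ and $T_i=T\cap V(\cJ^i)$ (these sets are disjoint). For each $i$, let $u_i$ denote the number of edges of $C_i$ missed by $T_A$. The central observation is that if $u_i\geq 1$, then $|T_i|\geq p-1$: pick such an $e\in C_i$ and let $D_e$ be the parallel class of $\cJ^i$ matched to $e$; every $f\in D_e$ yields an edge $e\cup f$ of $\cG_r$ which $T_A$ fails to hit, so $T_i$ must meet every line of $D_e$, and since $D_e$ consists of $p-1$ pairwise disjoint lines of AG$(2,p-1)$, this forces $|T_i|\geq p-1$.

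Let $U=\{i:u_i\geq 1\}$. If $|U|\geq 2$, disjointness of the $\cJ^i$ gives $|T|\geq\sum_{i\in U}|T_i|\geq 2(p-1)=r-1$. If $|U|=0$, then $T_A\cup\{x\}$ covers AG$(2,p)$ and Theorem~\ref{BrSch} yields $|T_A|\geq 2p-2$. If $|U|=1$, say $U=\{1\}$, then $T_A$ covers $C_2,\ldots,C_{p+1}$, so $T_A\cup\{x\}$ misses only the $p-1$ pairwise disjoint lines of $C_1$ in AG$(2,p)$, which can be patched with $p-1$ extra points; Theorem~\ref{BrSch} then gives $|T_A|+p\geq 2p-1$, hence $|T_A|\geq p-1$, and combined with $|T_1|\geq p-1$ this again yields $|T|\geq 2p-2=r-1$. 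The only genuine obstacle is the central observation that covering any single matched parallel class of $\cJ^i$ costs at least $p-1$ vertices; once this is in hand, the three-case split uses Theorem~\ref{BrSch} in essentially the same way as Proposition~\ref{Hprops}.
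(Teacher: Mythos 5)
Your proof is correct. Part 1 is the same as the paper's argument. For Part 2 you take a somewhat different route: the paper does not decompose $|T|=|T_A|+\sum_i|T_i|$ and split on the number of classes $C_i$ containing an edge missed by $T_A$; instead it runs an exchange argument. It first observes (exactly your central observation) that if a minimum cover $T$ meets some $\cJ^i$, then by minimality some $e\in C_i$ is disjoint from $T$, so $T$ must contain $p-1$ vertices of $\cJ^i$ to hit the $p-1$ pairwise disjoint edges $e\cup g$ with $g$ in the matched parallel class; it then trades those $p-1$ vertices for one vertex on each of the $p-1$ disjoint edges of $C_i$, obtaining another minimum cover avoiding $\cJ^i$. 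Iterating yields a minimum cover contained in $V(\cA_p)$, after which only your $|U|=0$ case (the Jamison/Brouwer--Schrijver bound of Theorem~\ref{BrSch}) is needed. Both arguments rest on the same two quantitative facts --- the cost $p-1$ of covering a matched parallel class of $\cJ^i$, and $\tau(\mathrm{AG}(2,p))=2p-1$ --- but your direct count avoids having to check that the exchanged set is still a cover of the same size, at the price of the extra $|U|=1$ case. Incidentally, in that case the bound $|T_A|\ge p-1$ follows at once from the fact that $T_A$ hits all $p-1$ disjoint edges of any $C_j$ with $j\notin U$, so the patching of AG$(2,p)$ there is not needed.
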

\begin{proof}
It follows immediately from the definitions that $\cG_r$ is $r$-partite and
$r$-uniform. To see that $\cG_r$ is intersecting, let $e\cup f$ and
$e'\cup f'$ be
two edges of $\cG_r$. If $e=e'$ or if $e$ and $e'$ are from different parallel
classes of $\cA_p$ then they intersect in $\cA_p$, implying that
$e\cup f$ and $e'\cup f'$ intersect in $\cG_r$. If $e$ and $e'$ are from
the same parallel class of $\cA_p$ then $f$ and $f'$ are
two edges from distinct parallel classes of $\cJ$, and therefore they
intersect.  

To estimate $\tau(\cG_r)$, consider a minimum cover $T$. 
If no vertex of $T$ is in any $\cJ^i$, then $T\cup\{x\}$ is a cover of
the affine plane AG$(2,p)$, 
where $x$ is the vertex deleted from AG$(2,p)$ in the construction of $\cA_p$.
By Theorem~\ref{BrSch}, this must have size at least $2p-1$. Hence
$|T|\geq 2p-2=r-1$.

To conclude the proof we show that there exists a minimum cover $T$
that is disjoint from all $\cJ^i$.
To see this, suppose that $T$ contains a vertex $z$ of $\cJ^i$.
Since $T$ is a minimum cover there exists an edge $e\cup f'$ of
$\cG_r$ such that $T\cap(e\cup f')=\{z\}$, where $e\in C_i$ and
$f'\in\cJ^i$. Then $T\cap e=\emptyset$. But the $p-1$ edges $e\cup g$
for all $g$ in a parallel class of $\cJ^i$ are edges of $\cG_r$, and
therefore $T$ contains $p-1$ vertices of $\cJ^i$ to cover them. But
then the edges of $\cG_r$ meeting $\cJ^i$ could instead be covered by 
$p-1$ vertices of $\cA_p$, one from each edge in $C_i$. Repeating this
argument shows the existence of a minimum $T$ disjoint from all
$\cJ^i$, thus completing the proof.
\end{proof}
 
We remark in closing that except for a few sporadic small examples,
all constructions of intersecting $r$-partite hypergraphs $\cH$ with
$\tau(\cH)$ close to $r$ seem to be based in some way on finite
projective planes, and hence depend on the existence of these special
structures. It would be interesting either to find a different type of
construction, or to show that near-extremal constructions must contain
large pieces from a projective plane. 
\bigskip

\noindent{\bf Acknowledgement.} We would like to thank an anonymous
referee for a careful reading and helpful comments.

\end{document}